\newtheorem{theorem}{Theorem}[section]
\newtheorem{proposition}[theorem]{Proposition}
\newtheorem{corollary}[theorem]{Corollary}
\theoremstyle{remark}
\newtheorem{remark}[theorem]{Remark}
\title{Lower Bounds for the Probability of a Union via Chordal Graphs}
\author{Klaus Dohmen\footnote{Address: Department of Mathematics, Mittweida
    University of Applied Sciences, Postfach 1451, 09644 Mittweida,
    Germany. E-mail: \texttt{dohmen@hsmw.de}. WWW:
    \texttt{http://www.hsmw.de/dohmen}.}}
\begin{document}

\maketitle

\begin{abstract}
We establish new Bonferroni-type lower bounds for the probability of a union
of finitely many events where the selection of intersections in the estimates
is determined by the clique complex of a chordal graph.
\end{abstract}

\section{Introduction}

The classical Bonferroni inequalities state that for any finite collection of
events $\{A_v\}_{v\in V}$,
\begin{equation}
\label{classicalbon}
\begin{array}{rcl}
\displaystyle \Pr\left( \bigcup_{v\in V} A_v \right) & \le & \displaystyle\sum_{\substack{I\in\mathscr{P}^\ast(V) \\ |I|\le 2r-1}} (-1)^{|I|-1} \Pr\left( \bigcap_{i\in I} A_i \right)\\
\displaystyle \Pr\left( \bigcup_{v\in V} A_v \right) & \ge & \displaystyle\sum_{\substack{I\in\mathscr{P}^\ast(V) \\ |I|\le 2r}} (-1)^{|I|-1} \Pr\left( \bigcap_{i\in I} A_i \right)
\end{array}
\qquad (r=1,2,3,\dots),
\end{equation}
where $\mathscr{P}^\ast(V)$ denotes the set of non-empty subsets of $V$.
Numerous variants of these inequalities are known; see, e.g., Galambos and
Simonelli \cite{Galambos-Simonelli:1996} for a survey on
\emph{Bonferroni-type} inequalities, which are variants of
(\ref{classicalbon}) that are applicable to any finite family of events.  More
recent variants arising from abstract tubes
\cite{Dohmen:2003,Naiman-Wynn:1997} and monomial ideals \cite{Saenz-Wynn:2009}
take advantage of the underlying structure of events, thus reducing the sum in
(\ref{classicalbon}) to a subcomplex of $\mathscr{P}^\ast(V)$, while providing
provably tighter bounds at all truncations.

This short note is inspired by both lines of research.  Our main result is a
variant of the lower bound in (\ref{classicalbon}), which is applicable to any
finite family of events, and where the selection of intersections in the
estimates is determined by the clique complex of a chordal graph.  

We refer to \cite{Diestel:2005} for terminology on graphs.  We write $G=(V,E)$
to denote that $G$ is a graph having vertex-set $V$, which we assume to be
finite, and edge-set $E$, consisting of two-element subsets of $V$.  We use
$\mathscr{C}(G)$ to denote the \emph{clique complex} of $G$, that is, the
abstract simplicial complex consisting of all non-empty cliques of $G$.  A
graph is referred to as \emph{chordal} if it contains no cycles of length four
or any higher length as induced subgraphs.

Our main result in Section \ref{sec:main-result} complements the upper bound
in Proposition \ref{cgs} below, which interpolates between Boole's inequality
($G$ edgeless or $r=1$), Hunter's inequality \cite{Hunter:1976} ($G$ a tree
and $r\ge 2$), and the sieve formula ($G$ complete and $r\ge (|V|+1)/2$).  For
$r=2$, the bound in Proposition \ref{cgs} was independently found by Boros and
Veneziani \cite{Boros-Veneziani:2002} by linear programming techniques; see
also \cite{Veneziani:2009} for a discussion on this particular case.

\begin{proposition}[\cite{Dohmen:2003,Dohmen:2002}]
\label{cgs}
Let $\{A_v\}_{v\in V}$ be a finite collection of events, where the indices
form the vertices of a chordal graph $G$.  Then,
\begin{align}
\label{alt1}
\Pr\left(\, \bigcup_{v\in V} A_v \right) & \,\le\,
\sum_{\substack{I\in\,\mathscr{C}(G)\\ |I|\le 2r-1}} (-1)^{|I|-1} \,\Pr\left(
\,\bigcap_{i\in I} A_i \right) \quad (r=1,2,3,\dots) .
\end{align}
\end{proposition}

\section{Main result}
\label{sec:main-result}

We use $c(G)$ resp\@. $\alpha(G)$ to denote the number of connected components
resp\@. the independence number of $G$.  Our main result is a lower
bound analogue of (\ref{alt1}).

\begin{theorem}
\label{maintheorem}
Let $\{A_v\}_{v\in V}$ be a non-empty finite collection of events, where the
indices form the vertices of a chordal graph $G$.  Then,
\begin{align}
\label{mainineq}
\Pr\left(\, \bigcup_{v\in V} A_v \right) & \,\ge\, \frac{1}{\alpha(G)}
\sum_{\substack{I\in\,\mathscr{C}(G)\\ |I|\le 2r}} (-1)^{|I|-1} \,\Pr\left(
\,\bigcap_{i\in I} A_i \right) \quad (r=1,2,3,\dots) .
\end{align}
\end{theorem}

\begin{remark}
Due to (\ref{alt1}) the upper bound in (\ref{mainineq}) is non-negative for
$r\ge |V|/2$. If $G$ is complete, (\ref{mainineq}) coincides with the
classical Bonferroni lower bounds.
\end{remark}

The proof of Theorem \ref{maintheorem} is facilitated by the following
proposition.

\begin{proposition}
\label{myprop}
For any chordal graph $G=(V,E)$, 
\begin{equation}
\sum_{I\in\mathscr{C}(G)\atop |I|\le 2r} (-1)^{|I|-1} \,\le\, c(G) \quad (r=1,2,3,\dots),
\end{equation}
with equality if $r\ge |V|/2$.
\end{proposition}

\begin{proof}
If $G$ is connected, the result follows from the following topological
results: 
\begin{enumerate}[i)]
\item The clique complex of any connected chordal graph is contractible
(\cite{Edelman-Reiner:2000}).
\item For any contractible abstract simplicial complex, the
Euler characteristic of its $(2r-1)$-skeleton is at most $1$, with equality if
$r\ge |V|/2$ (\cite{Naiman-Wynn:1997}).
\end{enumerate}
If $G$ is disconnected, apply the inequality to each of its connected
components.
\end{proof}

\begin{proof}[Proof of Theorem \ref{maintheorem}]
For any $J\subseteq V$, $J\neq \emptyset$, define
\begin{equation*}
B_J := \bigcap_{i\in J} A_i \cap \bigcap_{i\notin J} \overline{A_i} .
\end{equation*}
Note that the $B_J$'s form a partition of $\bigcup_{v\in V} A_v$.  Clearly,
$G[J]$ is chordal and $c(G[J])\le \alpha(G)$ for any $J\subseteq V$,
$J\neq\emptyset$.  Hence, by applying Proposition \ref{myprop} to $G[J]$ we
obtain
\begin{align*}
\Pr \left( \bigcup_{v\in V} A_v \right) & = \Pr \left(
\bigcup_{\substack{J\subseteq V\\ J\neq\emptyset}} B_J \right) \,=\,
\sum_{\substack{J\subseteq V\\ J\neq\emptyset}} \Pr(B_J)
\,\ge\, \frac{1}{\alpha(G)} \sum_{\substack{J\subseteq V \\ J\neq \emptyset}} c(G[J]) \Pr(B_J) \\
& \ge \frac{1}{\alpha(G)}\sum_{\substack{J\subseteq V \\ J\neq \emptyset}}
\sum_{\substack{I\in \mathscr{C}(G[J])\\ |I|\le 2r}} (-1)^{|I|-1} \Pr(B_J)
\, = \,\frac{1}{\alpha(G)} \sum_{\substack{I\in\mathscr{C}(G)\\ |I|\le 2r}} (-1)^{|I|-1} \sum_{J\supseteq I} \Pr(B_J) \\
& = \frac{1}{\alpha(G)}\sum_{\substack{I\in\mathscr{C}(G)\\ |I|\le 2r}}
(-1)^{|I|-1} \Pr \left( \bigcup_{J\supseteq I} B_J \right) \, = \,
\frac{1}{\alpha(G)}\sum_{\substack{I\in\mathscr{C}(G)\\ |I|\le 2r}}
(-1)^{|I|-1} \Pr\left( \bigcap_{i\in I} A_i \right). \qedhere
\end{align*}
\end{proof}

\begin{remark}
In view of the preceding proof and Proposition \ref{myprop} the best bound in
(\ref{mainineq}) is obtained for $r\ge |V|/2$.  Tighter bounds can be obtained
by replacing $\alpha(G)$ by $\alpha'(G) := \max \{ c(G[J]) \mathrel|
\emptyset\neq J\subseteq V, \, B_J\neq\emptyset \}$.  In particular, if
$G[J]$ is connected whenever $B_J\neq\emptyset$ for any choice of $J\subseteq
V$, $J\neq\emptyset$, then $\alpha'(G)=1$.  In this case, for $r\ge
\frac{|V|+1}{2}$ the inequalities in (\ref{alt1}) and (\ref{mainineq}) turn
into an identity, which is well-known in abstract tube theory
\cite{Dohmen:2003}.
\end{remark}

\begin{remark}
\label{counterex}
The requirement that $G$ is chordal cannot be omitted from Theorem
\ref{maintheorem}.  Consider the non-chordal graph $G=(V,E)$ depicted in
Figure \ref{mycounterexample}, and consider events $A_v$, $v\in V$, with
$\Pr(A_v)=1$ for any $v\in V$.  The clique complex of this graph consists of 8
cliques of size 1, 20 cliques of size 2, and 16 cliques of size 3.  Since
$\alpha(G)=3$, the first inequality in Theorem \ref{maintheorem} gives the
non-valid bound $1 \ge \frac{1}{3}(8-20+16)=\frac{4}{3}$ for $r\ge 2$.
\par
This graph $G$ is a subgraph of $G_3$\,---\,the first graph in an infinite
sequence $(G_k)_{k=3,5,\dots}$ of non-chordal graphs for which
(\ref{mainineq}) does not hold.  Each $G_k$ is the join of $k$ disjoint copies
of the edgeless graph on three vertices.  By letting $r\ge \frac{3k}{2}$ and
$\Pr(A_v)=1$ for any vertex $v$ of $G_k$, (\ref{mainineq}) specializes to $1
\ge \frac{1}{3} (1 + 2^k)$, which does not hold for any $k\ge 3$.
\end{remark}

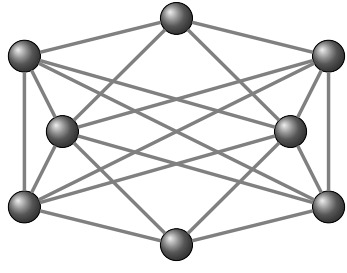
\begin{figure}[ht]
\begin{center}
\scalebox{0.5}{%
\begin{tikzpicture}[node distance = 3cm]
\GraphInit[vstyle=Shade]
\SetGraphShadeColor{gray}{gray}{gray}
\SetVertexNoLabel
\Vertex[x=4,y=0]{H}
\Vertex[x=0,y=1]{A}
\Vertex[x=0,y=5]{B}
\Vertex[x=1,y=3]{C}
\Vertex[x=4,y=6]{G}
\Vertex[x=8,y=1]{F}
\Vertex[x=8,y=5]{E}
\Vertex[x=7,y=3]{D}
\Edges(A,B,G,E,F,H,A)
\Edges(A,C,B)
\Edges(F,D,E)
\Edge(A)(D)
\Edge(A)(E)
\Edge(C)(E)
\Edge(C)(F)
\Edge(B)(D)
\Edge(B)(F)
\Edge(C)(H)
\Edge(D)(H)
\Edge(C)(G)
\Edge(D)(G)
\end{tikzpicture}
}
\end{center}
\caption{\label{mycounterexample} A non-chordal graph for which
  (\ref{mainineq}) does not hold.}
\end{figure}

\section{Particular cases}

Similar to \cite{Dohmen:2003}, where upper bounds are derived from
(\ref{alt1}) for different choices of $G$, lower bound analogues of these
bounds can be derived from (\ref{mainineq}).  As examples we consider lower
bound analogues of Hunter's inequality \cite{Hunter:1976} and Seneta's
inequality \cite{Seneta:1988}.

\begin{corollary} 
\label{cortree1}
Let $\{A_v\}_{v\in V}$ be a non-empty finite collection of events,
where the indices form the vertices of a tree $G=(V,E)$. Then,
\begin{align*}
\Pr\left(\, \bigcup_{v\in V} A_v \right) & \,\ge\, \frac{1}{\alpha(G)} \left(\, \sum_{v\in V} \Pr(A_v) \, - \sum_{\{v,w\}\in E} \Pr(A_v\cap A_w) \right) .
\end{align*}
\end{corollary}

\begin{proof}
Since any tree is chordal, the result follows from Theorem \ref{maintheorem}
with $r\ge 2$.
\end{proof}

\begin{corollary}
\label{analoguetoseneta}
Let $A_1,\dots,A_n$ be a finite collection of events, and
$j,k\in\{1,\dots,n\}$. Then,
\begin{multline*}
\Pr\left(\,\bigcup_{i=1}^n A_i \right) \,\ge\, \frac{1}{n-2+\delta_{jk}} \left( \sum_{i=1}^n \Pr(A_i) - \sum_{\substack{i=1\\ i\neq j}}^n \Pr(A_i\cap A_j) - \sum_{\substack{i=1\\ i\neq j,k}}^n \Pr(A_i\cap A_k) \right. \\
\left. + \sum_{\substack{i=1\\ i\neq j,k}}^n \Pr(A_i\cap A_j\cap A_k) \right)
,
\end{multline*}
provided $n>2-\delta_{jk}$ where $\delta_{jk}$ denotes the Kronecker delta.
\end{corollary}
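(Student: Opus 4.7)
The plan is to apply inequality \eqref{mainineq2} of Theorem \ref{maintheorem} to a chordal graph $G$ on $V=\{1,\dots,n\}$ chosen so that its clique-complex alternating sum reproduces the bracketed right-hand side of the statement, and whose independence number equals $n-2+\delta_{jk}$.

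Concretely, I take $G$ to be the graph with edge set
\[ E \,=\, \bigl\{\{i,j\} : i\in V,\, i\ne j\bigr\} \,\cup\, \bigl\{\{i,k\} : i\in V,\, i\notin\{j,k\}\bigr\}. \]
When $j\ne k$, both $j$ and $k$ are universal in $V$ (the edge $\{j,k\}$ coming from the first family) and no two vertices outside $\{j,k\}$ are adjacent; when $j=k$, $G$ degenerates to the star with center $j$. The structural facts I need are three. First, chordality: the star case is immediate, and when $j\ne k$ every induced cycle of length at least four would have to alternate between the hubs $\{j,k\}$ and the non-hubs, forcing length exactly four, but every such $4$-cycle contains the chord $\{j,k\}$. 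Second, the clique complex: since no two non-hubs are adjacent, the only cliques are the $n$ singletons, the edges listed above, and (only when $j\ne k$) the triangles $\{i,j,k\}$ for $i\ne j,k$; in particular no $4$-clique occurs. Third, the independence number: when $j\ne k$, both hubs are universal and mutually adjacent, so a maximum independent set is given by the $n-2$ non-hubs; when $j=k$ it is the $n-1$ leaves of the star. Both values agree with $n-2+\delta_{jk}$.

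With these verified, the proof is mechanical: expanding $\sum_{I\in\mathscr{C}(G)}(-1)^{|I|-1}\Pr(\bigcap_{i\in I}A_i)$ term by term produces $\sum_i\Pr(A_i)$ from the singletons, the two negative edge sums from the two edge families, and $+\sum_{i\ne j,k}\Pr(A_i\cap A_j\cap A_k)$ from the triangles, which is exactly the bracketed expression in the statement. Dividing by $\alpha(G)=n-2+\delta_{jk}$ in \eqref{mainineq2} yields the corollary. I do not anticipate a genuine obstacle; the only point that warrants care is the bookkeeping in the $j=k$ case, where the edge and triangle contributions indexed by $k$ must be seen to collapse into those indexed by $j$ via $A_j=A_k$, leaving precisely the star's alternating sum, while the Kronecker-delta adjustment of the denominator is accounted for by the jump of $\alpha(G)$ from $n-2$ to $n-1$ as the two hubs merge.
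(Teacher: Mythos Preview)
Your proof is correct and follows essentially the same approach as the paper: the graph you construct is precisely the join $K_{2-\delta_{jk}} \ast L_{n-2+\delta_{jk}}$ used there, and the argument is the direct application of inequality~\eqref{mainineq2} of Theorem~\ref{maintheorem}. You simply spell out the verification of chordality, the clique complex, and the independence number that the paper leaves implicit.
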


\begin{proof}
Define $G = K_{2-\delta_{jk}} \ast L_{n-2+\delta_{jk}}$ (the join of a
complete graph on $2-\delta_{jk}$ vertices and an edgeless graph on
$n-2+\delta_{jk}$ vertices), and apply Theorem \ref{maintheorem} with $r\ge |V|/2$.
\end{proof}

\section*{Acknowledgment}

The author wishes to thank Jakob Jonsson (KTH Stockholm) for pointing out the
sequence of graphs in Remark~\ref{counterex}.


\begin{thebibliography}{99}

\bibitem{Boros-Veneziani:2002} E. Boros \&\ P. Veneziani, \emph{Bounds of
  degree 3 for the probability of the union of events}, Rutcor Research Report
3-02, 2002.

\bibitem{Diestel:2005} R. Diestel, \emph{Graph Theory}, Graduate Texts in
Mathematics, 3rd edition, Springer-Verlag, New York, 2005.

\bibitem{Dohmen:2003} K. Dohmen, \emph{Improved Bonferroni Inequalities via
  Abstract Tubes}, Lecture Notes in Mathematics No\@. 1826, Springer-Verlag,
Berlin/Heidelberg, 2003.

\bibitem{Dohmen:2002} K. Dohmen, \emph{Bonferroni-type inequalities via
  chordal graphs}, Combin\@. Probab\@. Comput\@. \textbf{11} (2002), 349--351.

\bibitem{Edelman-Reiner:2000} P.H. Edelman and V. Reiner, \emph{Counting the
  interior points of a point configuration}, Discrete
Comput\@. Geom\@. \textbf{23} (2000), 1--13.

\bibitem{Galambos-Simonelli:1996} J. Galambos and I. Simonelli,
\emph{Bonferroni-type Inequalities with Applications}, Springer-Verlag, New
York, 1996.

\bibitem{Hunter:1976} D. Hunter, \emph{An upper bound for the probability of a
  union}, J\@. Appl\@. Prob\@. \textbf{13} (1976), 597--603.

\bibitem{Naiman-Wynn:1997} D.Q. Naiman and H.P. Wynn, \emph{Abstract tubes,
  improved inclusion-exclusion identities and inequalities and importance
  sampling}, Ann\@. Statist\@. \textbf{25} (1997), 1954--1983.

\bibitem{Seneta:1988} E. Seneta, \emph{Degree, iteration and permutation in
  improving Bonferroni-type bounds}, Austral\@. J\@. Statist\@. \textbf{30A}
(1988), 27--38.

\bibitem{Veneziani:2009} P. Veneziani, \emph{Upper bounds of degree 3 for the
  probability of the union of events via linear programming}, Discrete
Appl\@. Math\@. \textbf{157} (2009), 858--863.

\bibitem{Saenz-Wynn:2009} E. S\'aenz-de-Cabez\'on and H.P. Wynn,
\emph{Betti numbers and minimal free resolutions for multi-state system
  reliability bounds}, J. Symb\@. Comp\@. \textbf{44} (2009), 1311--1325.

\end{thebibliography}
\end{document}